\begin{document}
	
	\newcommand{\mf}{\mathfrak}
	\newcommand{\mc}{\mathcal}
	\newcommand{\mb}{\mathbf}
	
	\newcommand{\R}{\mathbf R}
	\newcommand{\C}{\mathbf C}
	\newcommand{\Q}{\mathbf Q}
	\newcommand{\Z}{\mathbf Z}
	\newcommand{\F}{\mathbf F}
	\newcommand{\N}{\mathbf N}

	\newcommand{\Fix}{\textnormal{Fix}}
	\newcommand{\End}{\textnormal{End}}
	\newcommand{\Frob}{\textnormal{Frob}}

	\newcommand{\sm}[4]{{
			\left(\begin{smallmatrix}{#1}&{#2}\\{#3}&{#4}\end{smallmatrix}\right)}}
	\newcommand{\sv}[2]{
		\genfrac(){0pt}{1}{#1}{#2}}

	\numberwithin{equation}{section}
	
	\theoremstyle{plain}
	\newtheorem{theorem}{Theorem}
	\newtheorem{lemma}[theorem]{Lemma}
	\newtheorem{proposition}[theorem]{Proposition}
	\newtheorem{corollary}[theorem]{Corollary}
	\newtheorem*{t2}{Theorem 1}
	\newtheorem*{p1}{Theorem 2}
	
	\theoremstyle{definition}
	\newtheorem{definition}[theorem]{Definition}
	\newtheorem{example}[theorem]{Example}
	\newtheorem{conjecture}[theorem]{Conjecture}
	\newtheorem{remark}[theorem]{Remark}

	\title{On the p-rank of curves}
	\author{SADIK TERZ\.{I}}
	
	\address{Middle East Technical University (NCC), Mathematics Department, Kalkanli, KKTC, Mersin 10, Turkey.}
	\email{terzi@metu.edu.tr}
	
	
	\subjclass[2010]{Primary 14G17, 14M10; Secondary 14H50}
	
	\keywords{$p$-rank, complete intersections, Hirzebruch surfaces.}
	
	\maketitle
	
	\begin{abstract}
		In this paper, we are concerned with the computations of the $p$-rank of curves in two different setups. We first work with complete intersection varieties in $\mb{P}^n \text{ for}~n\ge 2$ and compute explicitly the action of Frobenius on the top cohomology group. In case of curves and surfaces, this information suffices to determine if the variety is ordinary. Next, we consider curves on more general surfaces with $p_g(S) = 0 = q(S)$ such as Hirzebruch surfaces and determine $p$-rank of curves on Hirzebruch surfaces.  
	\end{abstract}
	
	\vskip 0.5cm
	\section{Introduction}
Let $X$ be a smooth projective  curve of genus $g\geq 2$ over an algebraically closed field $k$ of  characteristic  $p > 0$. The $p$-rank $\sigma(X)$ and $a$-number $a(X)$ are fundamental invariants of $X$(see Definition \ref{od} below), and have been studied extensively by determining the action of the Frobenius map on the cohomology group $H^1(X, \mc{O}_X)$ or equivalently the action of the Cartier operator on $H^0(X, \Omega_X)$. In the former case one essentially determines the Hasse-Witt matrix (\cite{M}) and in the latter the Cartier-Manin matrix (\cite{Y})  describing the action. For a more extensive bibliography on Hasse-Witt and Cartier-Manin matrices we refer to \cite{A}.  We are interested in determining the $p$-rank $\sigma (X)$ and $a$-number $a(X)$ for curves in certain varieties.
\par In the second section, we consider complete intersection curves in a projective space ${\mb P}^n$ cut out by hypersurfaces $X_i:f_i=0$ where $f_i$ is a degree $n_i$ polynomial for $i=1,\cdots,r$. Writing $Y_i=X_1\cap X_2 \cap \cdots  \cap X_i$ for $i=1,2,\ldots ,r$ and $Y_0= {\mb P}^n_k$, we identify $  H^{n-i}(Y_i,\mathcal{O}_{Y_i})$ as a subspace of $ H^n(Y_0,\mathcal{O}_{Y_0}(-\sum_{j=1}^{r}n_{j}))$ consisting of classes killed by multiplication by $f_j$ for $j=1,\cdots,i$. Theorem \ref{Thm} provides an explicit description of these classes amenable to computation: 
\begin{theorem}{\label{Thm}}
		For each $i=1,\cdots,r$, we have the following isomorphism of the vector spaces
			$$ H^{n-i}(Y_i,\mathcal{O}_{Y_i}) \cong \{\alpha \in  H^n(Y_0,\mathcal{O}_{Y_0}(-\sum_{t=1}^{r}n_{t})) \mid \alpha f_j = 0 \text{ in }  ~H^n(Y_0,\mathcal{O}_{Y_0}(-\sum_{\substack{t=1 \\ t\neq j   } }^{i} n_t) ) \text{ } \forall j=1,\ldots,i \} $$
				where $f_j$ is a homogenous polynomial of degree $n_j$ for $j= 1,2, \cdots , i$.
		\end{theorem}
In Theorem ~\ref{P13} below, we prove that the action of the Frobenius map on this subspace is given by $ F_i^*=(f_if_{i-1} \cdots f_1)^{p-1}F_0^*$ where $F_0^*$ is Frobenius on the top cohomology group of $Y_0$. We can therefore compute the action of Frobenius on $ H^{n-i}(Y_i,\mathcal{O}_{Y_i})$ by computing on $ H^n(Y_0,\mathcal{O}_{Y_0}(-\sum_{t=1}^{r}n_{t})) $, which is clearly easier.
\newpage

\begin{theorem}
    Let $$F_i^*:H^{n-i}(Y_i,\mathcal{O}_{Y_i}(m)) \longrightarrow H^{n-i}(Y_i,\mathcal{O}_{Y_i}(pm))$$
			be the Frobenius map for $m\in \mathbb{Z}$ and $i=0,1,\ldots ,r$. Then we have the following:  $$F_i^*=(f_if_{i-1} \cdots f_1)^{p-1}F_0^* \quad              \text{on  } ~ H^{n-i}(Y_i,\mathcal{O}_{Y_i}) \subseteq H^n(Y_0,\mathcal{O}_{Y_0}(-\sum_{j=0}^{r-1}n_{r-j} )) $$
   for $i=1,2,\cdots,r$.
\end{theorem}

We combine Theorem \ref{Thm} with Theorem \ref{P13} to compute the $p-$rank of complete intersection curves and to check whether complete intersection surfaces are ordinary. We work out several examples to illustrate the methods. In fact, we provide a family of examples (Ex. \ref{ge}) of complete intersection curves to find lower bound on their $a$-number by using the explicit basis constructed in Theorem \ref{Thm} and by using the explicit action of the Frobenius map computed in Theorem \ref{P13}. This family consists of generalized Fermat curves $F_{m,n}$ of type $(m,n)$, which is an $(n-2)$-dimensional family of algebraic curves in the moduli space of smooth projective curves of genus $ g(F_{m,n}) = 1 + \frac{m^{n-1}}{2}((m-1)(n-1)-2)$ (\cite{Hi}, Section 2). The investigation of algebraic curves over fields of characteristic $p>0$ is related to several problems for curves over finite fields, such as the cardinality
of the set of rational points, the search for maximal curves with respect to the Hasse-Weil
bound, properties of zeta functions and Weierstrass points on curves. Many results have been obtained for the classical Fermat curves (i.e., $n=2$) \cite{GV, VZ, N}.
\par In the third section, we provide a formula for the action of Frobenius for curves in a more general surface $S$. If  $p_g(S) = 0 = q(S)$ where $p_g(S)  \text{ and } q(S)$ are the geometric genus and the irregularity of $S$ respectively, the method is quite effective because in this case the action of Frobenius can be easily calculated. We illustrate this property in Example \ref{ex5} for curves on Hirzebruch surfaces. As there are curves not defined on the projective plane ${\mb P}^2_k$ but possibly defined on Hirzebruch surfaces, one can expect that constraining a curve in a specific ambient space and taking advantage of its geometry enables one to determine the $p$-rank and the $a$-number of the curve. In fact, the explicit computation of a basis for $H^1(X,\mathcal{O}_X) $ and the Frobenius map on $H^1(X,\mathcal{O}_X) $ will be useful to calculate $\sigma(X)$ and $a(X)$ for such curves $X$ as in section 3.
 \par Now we recall the concept of ordinarity for curves and the definitions of the $p$-rank and the $a$-number.
 \begin{definition} \label{od} \cite[Thm. 2.2]{Su}
     Set $W=H^1(X,\mathcal{O}_X)$. Let $W^{s}$ be the maximal subspace of $W$ on which the Frobenius map $F^*$ acts bijectively. The natural numbers $\sigma(X) = \operatorname{dim}_k(W^s)$ and $a(X)= \operatorname{dim}_k(\operatorname{Ker}(F^*))$ are called the $p$-rank and the $a$-number of $X$, respectively. We say that $X$ is an ordinary curve if $ W^s = W$  
		\end{definition}

  \section{Complete intersections} 
		
		In this section, we work with complete intersections of arbitrary dimension in ${\mb P}^n_k$. In order  to check if it is ordinary (Definition~\ref{ordinary}) in the case of a curve or a surface, it suffices to check the action of $F$ on only the top cohomology group (Proposition \ref{pro10}). In Theorem \ref{P13}, we explicitly compute this action. As an example, we will provide a family of examples of smooth complete intersection curves $X$ by putting lower bound on $a$-number. We recall that smooth complete intersections are generically ordinary (\cite{I1}, Theorem 2.2). 
	\par The similar computations by using different methods can be found in \cite{K2}. The writer computes the action of Frobenius map by constructing Koszul complex of graded free modules and certain special lifting homomorphisms. Our work in Theorem \ref{P13} was done independently of work in \cite{K2}. Moreover, in \cite{Mo}, the author develops new method to compute Ekedahl-Oort type of complete intersection curve $X$ by enhancing the pair consisting of $H^1(X,\mathcal{O}_X)$ with its Hasse-Witt operator $F^*$ to Hasse-Witt triple defined in \cite[Section 2.5]{Mo}. The arguments in \cite{Mo} and in this paper were done independently.
		\begin{definition}(\cite{I1}, Def. 1.1) \label{ordinary}
		 A smooth proper variety $Y$ is $\it{ordinary}$ if $H^i(Y,B^j_{Y/k})=0$ for all $i$ and $j$ where 	$B^j_{Y/k}$ is the sheaf of the locally exact $j$-th differential forms on $Y$.
		\end{definition}
		\begin{remark}
		 If our complete intersection variety $Y$ is a curve, then Definition \ref{ordinary} is equivalent saying that the Frobenius map acts bijectively on the cohomology group $H^1(Y, \mc{O}_Y)$ (which is Definition \ref{od}) by using the following short exact sequence 
		 \begin{equation}
		    0\longrightarrow \mathcal{O}_Y \xrightarrow{F_Y} {F_Y}_*\mathcal{O}_Y \longrightarrow{}B^1_{Y/k}\longrightarrow 0. \tag{2}
		 \end{equation}
		For the case of a surface, using Serre's duality between the sheaves $B^1_{Y/k}$ and $B^2_{Y/k}$ and the exact sequence (2), a surface $Y$ is ordinary if and only if the Frobenius map acts bijectively on the cohomology group $H^2(Y, \mc{O}_Y).$ 
		\end{remark}
		Let $X_i:f_i=0$ be a degree $n_i$ hypersurface in projective $n$-space
		${\mb P}^n_k$ given by homogenous polynomial $f_i$ of degree $n_i$ for
		$i=1,2,\ldots ,r$. \\ 
		Let $$Y_i=X_1\cap X_2 \cap \cdots  \cap X_i$$ 
		for $i=1,2,\ldots ,r$ and $Y_0= {\mb P}^n_k.$ Set $F_i$ as the Frobenius morphism on $Y_i$ for $i=0,\ldots,n$. For all $m\in \Z $, we have $F_i:\mathcal{O}_{Y_i}(m) \longrightarrow {F_i}_*\mathcal{O}_{Y_i}(m) = \mathcal{O}_{Y_i}(pm) $ on the sheaf level. Moreover we define $F_i^*$ as the Frobenius map on the cohomology group $H^j(Y_i,\mathcal{O}_{Y_i}(m))$ for $j=0,\ldots, n-i$, i.e,
  $$ F_i^*:H^j(Y_i,\mathcal{O}_{Y_i}(m)) \longrightarrow H^j(Y_i,\mathcal{O}_{Y_i}(pm)). $$
 The following proposition is crucial for our computations. 
		\begin{proposition}\cite[Section 78, Proposition 5]{S2}{\label{pro10}} For $i=1,2,\ldots ,r$, we have the following;
			\begin{itemize}
				\item[a)] $H^j(Y_i,\mathcal{O}_{Y_i}(m))=0$ for $m\leq 0$, $0<j<\operatorname{dim}Y_i=n-i$,
				\item[b)] $H^0(Y_i,\mathcal{O}_{Y_i}(m))=0$ for $m< 0$,
				\item[c)] $H^0(Y_i,\mathcal{O}_{Y_i})=k$.
			\end{itemize}
		\end{proposition}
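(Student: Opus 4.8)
The plan is to establish (a), (b) and (c) simultaneously by induction on $i$, taking $Y_0={\mb P}^n_k$ as the base case. For projective space the three statements are the classical computation of the cohomology of the line bundles $\mathcal{O}(m)$: one has $H^0({\mb P}^n_k,\mathcal{O}(m))=0$ for $m<0$, $H^0({\mb P}^n_k,\mathcal{O})=k$ (the only global regular functions are constants), and the vanishing of the middle cohomology $H^j({\mb P}^n_k,\mathcal{O}(m))=0$ for all $0<j<n$ and \emph{all} $m$. Thus (a)--(c) hold for $Y_0$, with (a) in fact valid for every $m$, which is exactly the form that feeds the induction.

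For the inductive step I assume the statements for $Y_{i-1}$, which has dimension $n-i+1$. Because $X_1,\dots,X_r$ cut out a complete intersection, $f_i$ is a non-zerodivisor on $Y_{i-1}$, so multiplication by $f_i$ yields, after twisting by $\mathcal{O}(m)$, the short exact sequence of sheaves on $Y_{i-1}$
$$0 \to \mathcal{O}_{Y_{i-1}}(m-n_i) \xrightarrow{\;\cdot f_i\;} \mathcal{O}_{Y_{i-1}}(m) \to \mathcal{O}_{Y_i}(m) \to 0.$$
Its long exact cohomology sequence sandwiches each $H^j(Y_i,\mathcal{O}_{Y_i}(m))$ between $H^j(Y_{i-1},\mathcal{O}(m))$ and $H^{j+1}(Y_{i-1},\mathcal{O}(m-n_i))$, and the entire argument amounts to feeding the inductive hypotheses into these two neighbouring terms.

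Concretely, for (a) I fix $m\le 0$ and $0<j<n-i=\dim Y_i$; then $0<j<\dim Y_{i-1}$ and $0<j+1\le n-i<\dim Y_{i-1}$, so both $H^j(Y_{i-1},\mathcal{O}(m))$ and $H^{j+1}(Y_{i-1},\mathcal{O}(m-n_i))$ vanish by hypothesis (a) for $Y_{i-1}$ (here $m-n_i\le 0$ too), giving $H^j(Y_i,\mathcal{O}_{Y_i}(m))=0$. For (b) with $m<0$, the sequence produces an injection $H^0(Y_i,\mathcal{O}(m))\hookrightarrow H^1(Y_{i-1},\mathcal{O}(m-n_i))$ once $H^0(Y_{i-1},\mathcal{O}(m))=0$ by (b), and the target vanishes by (a) at $j=1$. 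For (c) the segment $0\to H^0(Y_{i-1},\mathcal{O}(-n_i))\to H^0(Y_{i-1},\mathcal{O})\to H^0(Y_i,\mathcal{O})\to H^1(Y_{i-1},\mathcal{O}(-n_i))$ has vanishing outer terms by (b) and (a) and middle term $k$ by (c), forcing $H^0(Y_i,\mathcal{O}_{Y_i})=k$.

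The two points that need care, and which I regard as the crux, are the exactness of the multiplication sequence and the dimension bookkeeping. The first is precisely where the complete-intersection hypothesis enters: $f_i$ must not vanish on any component of $Y_{i-1}$ so that $\cdot f_i$ is injective, which is what makes $f_1,\dots,f_r$ a regular sequence of sections. The second is the remark that the invocations of (a) at $j=1$ in (b) and (c) are legitimate only when $\dim Y_{i-1}\ge 2$; this is automatic in the intended regime, since (b) already fails for a nonempty zero-dimensional $Y_i$ (where $\mathcal{O}(m)\!\mid_{Y_i}$ is trivial), so the statement tacitly assumes $\dim Y_i\ge 1$, equivalently $r\le n-1$, and hence $\dim Y_{i-1}\ge 2$ throughout the induction. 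With this understood the neighbouring $H^1$ always lands inside the middle-cohomology range of $Y_{i-1}$ and no separate top-degree computation is required.
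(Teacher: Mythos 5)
Your proof is correct, and it is the standard argument: the paper itself gives no proof of this proposition, citing Serre's \emph{Faisceaux Alg\'ebriques Coh\'erents} instead, and your induction via the twisted multiplication sequences $0 \to \mathcal{O}_{Y_{i-1}}(m-n_i) \to \mathcal{O}_{Y_{i-1}}(m) \to \mathcal{O}_{Y_i}(m) \to 0$ is exactly the mechanism used there (and reused later in the paper's Diagrams 1--3). Your two caveats --- that the regular-sequence property is what makes $\cdot f_i$ injective, and that the invocations of (a) at $j=1$ require $\dim Y_i \ge 1$, which the statement tacitly assumes --- are both apt.
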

		
		\vspace{0.1cm}
		
		\begin{p1}\label{P13}	  Let $$F_i^*:H^{n-i}(Y_i,\mathcal{O}_{Y_i}(m)) \longrightarrow H^{n-i}(Y_i,\mathcal{O}_{Y_i}(pm))$$
			be the Frobenius map for $m\in \mathbb{Z}$ and $i=0,1,\ldots ,r$. Then we have the following:  $$F_i^*=(f_if_{i-1} \cdots f_1)^{p-1}F_0^* \quad              \text{on  } ~ H^{n-i}(Y_i,\mathcal{O}_{Y_i}) \subseteq H^n(Y_0,\mathcal{O}_{Y_0}(-\sum_{j=0}^{r-1}n_{r-j} )) $$
		for $i=0,1,\cdots,r$.	
		\end{p1}
		
		\begin{proof}
		We first compute the action of $F_r^*$ on $H^{n-r}(Y_r,\mc{O}_{Y_r})$ by adapting the  method of (\cite{H}, Section 4.4). \\
			Consider Diagram 1  for $i=2,3,...,r$ and Diagram 2. \\
			\adjustbox{scale=0.87,left}{
				\begin{tikzcd}
				0 \arrow{r}&\mathcal{O}_{Y_{r-i}}(-\sum_{j=0}^{i-1} n_{r-j} ) \arrow[r,"\times f_{r-i+1}"]\arrow[d,"F_{r-i}"] & \mathcal{O}_{Y_{r-i}}(-\sum_{j=0}^{i-2} n_{r-j}) \arrow[r]\arrow[d,"F_{r-i}"] & \mathcal{O}_{Y_{r-i+1}}(-\sum_{j=0}^{i-2} n_{r-j})                  \arrow{r}\arrow[d,"F_{r-i}"]       &  0 \\
				0 \arrow{r} & \mathcal{O}_{Y_{r-i}}(-p\sum_{j=0}^{i-1} n_{r-j} ) \arrow[r,"\times f^p_{r-i+1}"] \arrow[d,"\times f^{p-1}_{r-i+1}"]              &  \mathcal{O}_{Y_{r-i}}(-p\sum_{j=0}^{i-2} n_{r-j})              \arrow{r}\arrow[d,"id"]        &  \mathcal{O}_{Y_{r-i}}(-p\sum_{j=0}^{i-2} n_{r-j})        \arrow{r}\arrow[d,"q"] & 0 \\
				0 \arrow{r}	&  \mathcal{O}_{Y_{r-i}}(-n_{r-i+1}-p\sum_{j=0}^{i-2} n_{r-j} ) \arrow[r,"\times f_{r-i+1}"]                            & \mathcal{O}_{Y_{r-i}}(-p\sum_{j=0}^{i-2} n_{r-j})  \arrow{r} &\mathcal{O}_{Y_{r-i}}(-p\sum_{j=0}^{i-2} n_{r-j}) \arrow{r}          &0  &
				\end{tikzcd}
			}\\
			$$\text{Diagram 1}$$
			\begin{center}
				\begin{tikzcd}
				0 \arrow{r}&\mathcal{O}_{Y_{r-1}}(-n_r ) \arrow[r,"\times f_r"]\arrow[d,"F_{r-1}"] & \mathcal{O}_{Y_{r-1}} \arrow[r]\arrow[d,"F_{r-1}"] & \mathcal{O}_{Y_r}                  \arrow{r}\arrow[d,"F_{r-1}"]       &  0 \\
				0 \arrow{r} & \mathcal{O}_{Y_{r-1}}(-pn_r ) \arrow[r,"\times f^p_r"] \arrow[d,"\times f^{p-1}_r"]              &  \mathcal{O}_{Y_{r-1}}             \arrow{r}\arrow[d,"id"]        &  \mathcal{O}_{Y^p_r}        \arrow{r}\arrow[d,"q"] & 0 \\
				0 \arrow{r}	&  \mathcal{O}_{Y_{r-1}}(-n_r ) \arrow[r,"\times f_r"]                            & \mathcal{O}_{Y_{r-1}}  \arrow{r} &\mathcal{O}_{Y_r} \arrow{r}          &0  &
				\end{tikzcd} 
			\end{center}
			\vspace{0.1cm}
			$$\text{Diagram 2}$$
			Here, $Y^p_i$ is the subscheme of $Y_{i-1}$ defined by $f^p_i=0$ for $i=i,\ldots,r$. On the other hand, $Y_i$ is a closed subscheme of $Y^p_i$ and we have the quotient map $q:\mathcal{O}_{Y^p_i} \longrightarrow \mathcal{O}_{Y_i}$ for each $i$. By Diagram 2, Proposition \ref{pro10}, and the long exact sequences corresponding to the horizontal short exact sequences in Diagram 2, we have the following \\
			\vskip 0.15cm
			\adjustbox{scale=0.95,center}{
				\begin{tikzcd}
				0 \arrow{r}&H^{n-r}(Y_r,\mathcal{O}_{Y_r}) \arrow{r}\arrow[d,"F_{r-1}^*"] &H^{n-r+1}(Y_{r-1},\mathcal{O}_{Y_{r-1}}(-n_r)) \arrow[r,"\times f_r"]\arrow[d,"F_{r-1}^*"] &H^{n-r+1}(Y_{r-1},\mathcal{O}_{Y_{r-1}})                   \arrow{r}\arrow[d,"F_{r-1}^*"]       &  0 \\
				0 \arrow{r} &H^{n-r}(Y^p_r,\mathcal{O}_{Y^p_r})  \arrow{r} \arrow[d,"q"]              & H^{n-r+1}(Y_{r-1},\mathcal{O}_{Y_{r-1}}(-pn_r))              \arrow[r,"\times f^p_r"]\arrow[d,"f^{p-1}_r"]        &H^{n-r+1}(Y_{r-1},\mathcal{O}_{Y_{r-1}})         \arrow{r}\arrow[d,"id"] & 0 \\
				0 \arrow{r}	&H^{n-r}(Y_r,\mathcal{O}_{Y_r})   \arrow{r}                           &H^{n-r+1}(Y_{r-1},\mathcal{O}_{Y_{r-1}}(-n_r))  \arrow[r,"\times f_r"] & H^{n-r+1}(Y_{r-1},\mathcal{O}_{Y_{r-1}}) \arrow{r}          &0  &
				\end{tikzcd}
			}\\
			$$\text{Diagram 3}$$
			where $\times f$ is the multiplication by $f$ map ($a \mapsto af$). From Diagram 3, we get
			\begin{equation}
			  	H^{n-r}(Y_r,\mathcal{O}_{Y_r})=\operatorname{Ker}(\times f_r) \subseteq H^{n-r+1}(Y_{r-1},\mathcal{O}_{Y_{r-1}}(-n_r)) \tag{3}   
			\end{equation}
			$$\text{and }F_r=qF_{r-1}.$$ 
			Hence, by restricting $F_r^*$ to $\operatorname{Ker}(\times f_r)$, we find
			$$F_r^*=f^{p-1}_rF_{r-1}^*.$$
			Similarly, by using the diagrams on cohomology groups deduced from Diagram 1 and inductive arguments, one can easily conclude that for $i=2,3,\ldots ,r$
			
			$$H^{n-r+i-1}(Y_{r-i+1}, \mathcal{O}_{Y_{r-i+1}}(-\sum_{j=0}^{i-2}n_{r-j})) =\text{Ker}(\times f_{r-i+1}) \subseteq H^{n-r+i}(Y_{r-i}, \mathcal{O}_{Y_{r-i}}(-\sum_{j=0}^{i-1}n_{r-j})) =\text{Ker}(\times f_{r-i})$$
			and by restricting the Frobenius map to $\operatorname{Ker}(\times f_{r-i+1}),$ we obtain $$ F_{r-i+1}^*= f^{p-1}_{r-i+1}F_{r-i}^*. $$
			\vspace{0.1cm}
			From these two observations, we obtain
			$$\text{Ker}(\times f_{r-1})\subseteq \cdots \subseteq \text{Ker}(\times f_2) \subseteq \text{Ker}(\times f_1) \subseteq H^n(Y_0,\mathcal{O}_{Y_0}(-\sum_{j=0}^{r-1}n_{r-j} ).$$
			Upon using expression (3), one concludes that $\text{Ker}(\times f_r) \subseteq \text{Ker}(\times f_{r-1})$.
			Therefore, we have $$\text{Ker}(\times f_r) \subseteq \text{Ker}(\times f_{r-1})\subseteq \cdots \subseteq \text{Ker}(\times f_2) \subseteq \text{Ker}(\times f_1) \subseteq H^n(Y_0,\mathcal{O}_{Y_0}(-\sum_{j=0}^{r-1}n_{r-j} )).$$ 
		The proposition for $F_r^*$ follows $\text{by restricting }F_0^* ~~\text{to Ker}(\times f_r)   $, and employing induction. If we consider same setup for the codimension $i$ complete intersection variety $Y_i, i=1,\cdots, r-1$, we produce similar result for $F_i^*$. As a result, we prove that $ F_i^*=(f_if_{i-1} \cdots f_1)^{p-1}F_0^*$ on $ H^{n-i}(Y_i,\mathcal{O}_{Y_i})$ for $i=1,2,\cdots, r$.
  
		\end{proof}
		Next we determine explicitly $ H^{n-i}(Y_i,\mathcal{O}_{Y_i})$  as a subspace of 
		$H^n(Y_0,\mathcal{O}_{Y_0}(-\sum_{j=1}^{r}n_{j}))$. 
		\begin{t2} 
			For each $i=1,\cdots,r$, we have the following isomorphism of the vector spaces
			$$ H^{n-i}(Y_i,\mathcal{O}_{Y_i}) \cong \{\alpha \in  H^n(Y_0,\mathcal{O}_{Y_0}(-\sum_{t=1}^{r}n_{t})) \mid \alpha f_j = 0 \text{ in }  ~H^n(Y_0,\mathcal{O}_{Y_0}(-\sum_{\substack{t=1 \\ t\neq j   } }^{i} n_t) ) \text{ } \forall j=1,\ldots,i \} $$
				where $f_j$ is the homogenous polynomial of degree $n_j$ for $j= 1,2, \cdots , i$.
		\end{t2}
	
	\begin{proof}
	Define $k-$vector spaces $W$ and $W_i$ for $i=1,2,\cdots , r$ as follows:
	$$W = H^n(Y_0,\mathcal{O}_{Y_0}(-\sum_{j=1}^{r}n_{j})) \text{ and } W_i = H^n( Y_0,\mathcal{O}_{Y_0}(-\sum_{\substack{j=1 \\ j\neq i   } }^{r} n_j) ) \text{ } \forall i=1,\ldots,r \} $$ $ \text{and maps }\phi_i:W \longrightarrow W_i \text{ by } \alpha \mapsto \alpha  f_i.$ We will show that $$\operatorname{Ker}(\times f_r) \cong \{ \alpha \in W \mid \alpha f_i = 0 
	\text{ in }  ~W_i \text{ for } i=1,2,\cdots , r\} = \displaystyle \bigcap_{i=1}^{r}\operatorname{Ker}(\phi_i) $$ by inducting on the statement that the vector space 
		 $$ \{ \alpha \in W \mid \alpha f_j = 0 \quad \text{in }  ~W_j \text{ for } j=1,2,\cdots , i\} =  \displaystyle \bigcap_{j=1}^{i}\operatorname{Ker}(\phi_j) $$ is isomorphic to the vector space $\operatorname{Ker}(\times f_i)$, described in Theorem \ref{P13}. \\ Consider the following short exact sequence 
			$$0 \longrightarrow \operatorname{Ker}(\times f_1) \longrightarrow H^n(Y_0,\mathcal{O}_{Y_0}(-\sum_{j=1}^{r}n_{j})) \xrightarrow[]{\times f_1} H^n(Y_0,\mathcal{O}_{Y_0}(-\sum_{j=2}^{r}n_{j}))\longrightarrow 0.$$
			
		By the above short exact sequence, we obtain the statement for $i=1$, i.e.,
			 $$\operatorname{Ker}(\times f_1)= \{ \alpha \in W \mid \alpha f_1 = 0 
	\text{ in }  ~W_1 \} = \operatorname{Ker}(\phi_1). $$ 
Now we assume that our statement is true for $i=1,2,\cdots ,r-1$ and prove it for $i=r$. We note that 	$$ \text{Ker}(\times f_{r-1})=H^{n-r+1}(Y_{r-1},\mathcal{O}_{Y_{r-1}}(-n_r)) \subseteq W= H^n(Y_0,\mathcal{O}_{Y_0}(-\sum_{j=1}^{r}n_{j}))  $$ and we have the short exact sequence on the cohomology groups 
					$$0 \longrightarrow \operatorname{Ker}(\times f_r) \longrightarrow  H^{n-r+1}(Y_{r-1},\mathcal{O}_{Y_{r-1}}(-n_r)) \xrightarrow[]{\times f_r} H^{n-r+1}(Y_{r-1},\mathcal{O}_{Y_{r-1}}) \longrightarrow 0$$ 
     induced by the short exact sequence 
    \begin{center}
      \begin{tikzcd}
         0 \arrow{r}&\mathcal{O}_{Y_{r-1}}(-n_r ) \arrow[r,"\times f_r"] & \mathcal{O}_{Y_{r-1}} \arrow[r] & \mathcal{O}_{Y_r}                  \arrow{r}      &  0.   
      \end{tikzcd}  
    \end{center} 
From here, we conclude that for all $\alpha \in \operatorname{Ker}(\times f_{r-1}) \text{ with } \alpha f_r =0, \text{ we have } \alpha \in \operatorname{Ker}(\times f_r). $ 
				To complete the proof we need to show only that $\alpha f_r = 0 \text{ in } W_r$ for all $\alpha \in \operatorname{Ker}(\times f_r)$.  \\
For the last statement, we will use induction argument one more time. The statement is as follows:  \\
For all $ \alpha \in \operatorname{Ker}(\times f_r)$, $\alpha \in H^{n-r+i}(Y_{r-i}, \mathcal{O}_{Y_{r-i}}(-\sum_{j=0}^{i-1}n_{r-j}))$ for $i=1,2,\ldots ,r$ and $\alpha f_r = 0 $ in \\ 
			$$ H^{n-r+i}(Y_{r-i},\mathcal{O}_{Y_{r-i}}(n_r- \sum_{j=0}^{i-1}n_{r-j})) $$ for $i=1,\ldots,r$.
We consider the following short exact sequence \\
			$$0 \longrightarrow \operatorname{Ker}(\times f_r) \longrightarrow  H^{n-r+1}(Y_{r-1},\mathcal{O}_{Y_{r-1}}(-n_r)) \xrightarrow[]{\times f_r} H^{n-r+1}(Y_{r-1},\mathcal{O}_{Y_{r-1}}) \longrightarrow 0$$  
which exhibits the above for $i=1$. Now we assume that the statement is true for $i=1,2,\ldots ,r-1$ and deduce it for $i=r$. \\
Define $$Z_{r-i+1}= X_r\cap X_1 \cap \cdots \cap X_{r-i} \subseteq Y_{r-i}$$
			for $i=2,\ldots ,r-1$ and 
			$$Z_1= X_r \subseteq Y_0.$$
			Now consider Diagram 4: \\
			
			\vspace{0.1 cm}
			\adjustbox{scale=0.80,center}{
				\begin{tikzcd}
				& 0                \arrow{d}             & 0                 \arrow{d}                 & 0                 \arrow{d}          &   \\
				0 \arrow{r} & H^{n-2}(Z_2,\mathcal{O}_{Z_2}(-\sum_{i=2}^{r-1}n_i)) \arrow{r}\arrow{d} & H^{n-1}(Y_1,\mathcal{O}_{Y_1}(-\sum_{i=2}^{r}n_i))                \arrow[r,"\times f_r"]\arrow{d}       & H^{n-1}(Y_1,\mathcal{O}_{Y_1}(-\sum_{i=2}^{r-1}n_i))          \arrow{r}\arrow{d}           & 0 \\
				0 \arrow{r} & H^{n-1}(Z_1,\mathcal{O}_{Z_1}(-\sum_{i=1}^{r-1}n_i)) \arrow{r} \arrow[d,"\times f_1"]             &  H^n(Y_0,\mathcal{O}_{Y_0}(-\sum_{i=1}^{r}n_i))   \arrow[r,"\times f_r"]\arrow[d,"\times f_1"]      & H^n(Y_0,\mathcal{O}_{Y_0}(-\sum_{i=1}^{r-1}n_i))        \arrow{r}\arrow[d,"\times f_1"] & 0 \\
				0 \arrow{r}	& H^{n-1}(Z_1,\mathcal{O}_{Z_1}(-\sum_{i=2}^{r-1}n_i)) \arrow{r} \arrow{d}                           & H^n(Y_0,\mathcal{O}_{Y_0}(-\sum_{i=2}^{r}n_i))  \arrow[r,"\times f_r"]\arrow{d} & H^n(Y_0,\mathcal{O}_{Y_0}(-\sum_{i=2}^{r-1}n_i)) \arrow{r}\arrow{d}         &0   \\
				& 0                                & 0                                           & 0                                    &
				\end{tikzcd}
			}
			$$\text{Diagram 4}$$
			By the induction hypothesis, we have
			$$\alpha \in H^{n-1}(Y_1,\mathcal{O}_{Y_1}(-\sum_{i=2}^{r}n_i))$$
   for $i=r-1$. Thus  we see that
			$$\alpha \in  H^n(Y_0,\mathcal{O}_{Y_0}(-\sum_{i=1}^{r}n_i))$$ 
			from the middle vertical short exact sequence in Diagram 4. We get 
			$$\alpha f_r = 0\text{ in } H^n(Y_0,\mathcal{O}_{Y_0}(-\sum_{i=1}^{r-1}n_i))= W_r \text{ because } \alpha f_r = 0
			\text{ in } H^{n-1}(Y_1,\mathcal{O}_{Y_1}(-\sum_{i=2}^{r-1}n_i)) $$  by observing the upper right commutative square in Diagram 4. This completes the proof for $Y_r$. If we also follow same steps as above for $ Y_i =X_1\cap X_2 \cap \cdots  \cap X_i, i=1,\cdots,r-1$, we prove that   $\alpha \in H^{n-i}(Y_i,\mathcal{O}_{Y_i})$ if and only if $\alpha f_j = 0 \text{ in }W_j$ for $j=1,\cdots,i$. This finishes the proof.
			
		\end{proof} 
		\vspace{0.1cm}
	\begin{example} \label{ex4}
			We consider the smooth curve $X$ over a field $k$ of positive characteristic $p=2$ which is a complete intersection in ${\mb P}^3_k$   given by the surfaces
			$$S_1:f=xw-yz=0$$
			$$S_2:g=y^3+z^3+w^3+\lambda x^3=0,\quad \text{where } \lambda \neq \lambda^2.$$  A more general family of examples of complete intersection curves can be found in \cite[Section 3.1, Prop. 3.1.4]{K2}.
Since the vector spaces  $H^3({\mb P}^3,\mathcal{O}_{{\mb P}^3}(-3))=0$ and $H^3({\mb P}^3,\mathcal{O}_{{\mb P}^3}(-2)) = 0$, the maps $$[\times f] : H^3({\mb P}^3,\mathcal{O}_{{\mb P}^3}(-5)) \to H^3({\mb P}^3,\mathcal{O}_{{\mb P}^3}(-3)),$$
			$$[\times g] : H^3({\mb P}^3,\mathcal{O}_{{\mb P}^3}(-5)) \to H^3({\mb P}^3,\mathcal{O}_{{\mb P}^3}(-2))$$ are both zero.
Therefore, we have $$H^1(X,\mathcal{O}_X) \cong H^3({\mb P}^3,\mathcal{O}_{{\mb P}^3}(-5)),$$
			$$ F:=F_X^*=(fg)^{p-1}F^*_{{\mb P}^3}=fgF^*_{{\mb P}^3}$$
			and $$ B=\{\alpha_1 = \dfrac{1}{x^2yzw}, \alpha_2 = \dfrac{1}{xy^2zw}, \alpha_3 = \dfrac{1}{xyz^2w}, \alpha_4 = \dfrac{1}{xyzw^2}  \}$$ is a basis for $H^1(X,\mathcal{O}_X)$. It follows that
   \begin{align*}
F(\alpha_1) &= (fg)^{p-1}\alpha_1^p \\ &= fg\alpha_1^2 \\ &= \mathbin{\phantom-} (xy^3w +xz^3w+ xw^4+\lambda x^4w \\
  &- y^4z-yz^4-yzw^3-\lambda x^3yz )\frac{1}{x^4y^2z^2w^2} \\
   &= \mathbin{\phantom-} \frac{xy^3w}{x^4y^2z^2w^2} + \frac{xz^3w}{x^4y^2z^2w^2}+ \frac{xw^4}{x^4y^2z^2w^2} +\frac{\lambda x^4w}{x^4y^2z^2w^2 } \\
  &- \frac{y^4z}{x^4y^2z^2w^2} - \frac{yz^4}{x^4y^2z^2w^2} - \frac{yzw^3}{x^4y^2z^2w^2}- \frac{\lambda x^3yz}{x^4y^2z^2w^2}\\
  & = \frac{\lambda}{xyzw^2} \\ &=\lambda \alpha_4
 \end{align*}
in $H^1(X,\mathcal{O}_X)$ and similarly by applying $F$ to other basis elements we get
			$$F(\alpha_2)=fg\alpha_2^2= \alpha_3, $$ 
   $$F(\alpha_3)=fg\alpha_3^2=  \alpha_2,$$ $$F(\alpha_4)=fg\alpha_4^2= \alpha_1. $$
			This implies that $F$ is bijection on $H^1(X,\mathcal{O}_X)$. Thus, $X$ is an ordinary curve.

		\end{example}	
We will end this section by giving a family of smooth complete intersection curves in ${\mb P}^n$. We will obtain a lower bound on the $a$-number of the curves in this family by using  the action of Frobenius on cohomology. We will use the following example \cite[Section 2.2]{Hi} of smooth integral complete intersection curves :
 \begin{example} \label{ge}[Generalized Fermat Curve]
     Let $X$ be the curve defined as follows:
     
	$$ C^m(\lambda_0,\lambda_1,...,\lambda_{n-2}) := \begin{Bmatrix}  \lambda_0 x_0^m+x^m_1+x^m_2=0\\ \lambda_1 x_0^m+x^m_1+x^m_3=0 \\ \vdots \\ \lambda_{n-2}x_0^m+x^m_1+x^m_n=0\end{Bmatrix} \subset {\mb P}^n $$
 where $\lambda_0,\lambda_1,...,\lambda_{n-2}$ are pairwise different elements of field $k$ with $\lambda_i \neq 0 \text{ for } i=0,1,...,n-2$. We set $f_i = \lambda_ix_0^m+x^m_1+x^m_{i+2} $ for $i=0,1,...,n-2$. In Theorem \ref{Thm} we have described elements of  $H^1(X,\mathcal{O}_X)$ as follows:
 $$ \{\alpha \in  H^n({\mb P}^n,\mathcal{O}_{{\mb P}^n}(-(n-1)m)) \mid \alpha f_i = 0 \quad \text{in }  ~H^n({\mb P}^n,\mathcal{O}_{{\mb P}^n}(-(n-2)m) ) \text{ for } i=0,1,\ldots,n-2 \}.$$
   $$ H^n({\mb P}^n,\mathcal{O}_{{\mb P}^n}(-s) = \operatorname{Span}_k(\{ \dfrac{1}{ x_0^{\alpha_0}x_1^{\alpha_1} \cdots x_n^{\alpha_n}} \mid \displaystyle\sum_{i=0}^{n} \alpha_i = s, \text{ } \alpha_i \geq 1 \} )$$ for $ s\geq n-2$ \cite[Chapter III, Thm. 5.1]{H}. \par For 
a cohomology class $x_0^{-\alpha_0}x_1^{-\alpha_1}\cdots x_n^{-\alpha_n} \in H^n({\mb P}^n,\mathcal{O}_{{\mb P}^n}(-(n-1)m))$, we have \begin{align*}
 x_0^{-\alpha_0}x_1^{-\alpha_1} \cdots x_n^{-\alpha_n}f_i &= \mathbin{\phantom+} \lambda_i\,x_0^{-\alpha_0+m}\,x_1^{-\alpha_1} \, \cdots \, x_n^{-\alpha_n} \\ &+ \, x_0^{-\alpha_0} \, x_1^{-\alpha_1+m} \,  \cdots \, x_n^{-\alpha_n}  \\
  &+ \, x_0^{-\alpha_0} \, \cdots \,  x_{i+1}^{-\alpha_{i+1}}\, x_{i+2}^{-\alpha_{i+2}+m}\,  x_{i+3}^{-\alpha_{i+3}} \,  \cdots \,  x_n^{-\alpha_n}  
\end{align*} 
in $ H^n({\mb P}^n,\mathcal{O}_{{\mb P}^n}(-(n-2)m)) $ for all $i=0,\cdots,n-2$.
 We define sets $S_i$ and $S$ as $$S_i=\{ (\alpha_0,\alpha_1,\ldots,\alpha_n) \in \mathbb{N}^{n+1} \mid x_0^{-\alpha_0}x_1^{-\alpha_1} \cdots x_n^{-\alpha_n}f_i=0, \text{ } \displaystyle\sum_{j=0}^{n} \alpha_j = (n-1)m \text{ and } \alpha_j \geq 1 \} $$ for $i=0,1,\ldots , n-2$ and
$S= \displaystyle\bigcap^{n-2}_{i=0}S_i$.  We now see that 
\begin{align*}
 x_0^{-\alpha_0}x_1^{-\alpha_1} \cdots x_n^{-\alpha_n}f_i &= \mathbin{\phantom+} \lambda_i\,x_0^{-\alpha_0+m}\,x_1^{-\alpha_1} \, \cdots \, x_n^{-\alpha_n} \, + \, x_0^{-\alpha_0} \, x_1^{-\alpha_1+m} \,  \cdots \, x_n^{-\alpha_n}  \\
  &+ x_0^{-\alpha_0} \, \cdots \, x_{i+2}^{-\alpha_{i+2}+m} \,  \cdots \,  x_n^{-\alpha_n} \\
  & = 0
 \end{align*}
if and only if $ -\alpha_{i+2}+m \geq 0, \text{ } -\alpha_0+m \geq 0 \text{ and } -\alpha_1+m \geq 0$ for $i=0,1, \ldots , n-2$ if and only if $ \alpha_i \leq m $ for $i= 0,1, \ldots , n$. Therefore, we have $$S= \{ (\alpha_0,\alpha_1,\cdots,\alpha_n) \in \mathbb{N}^{n+1} \mid   \displaystyle\sum_{j=0}^{n} \alpha_j = (n-1)m \text{ and } 1\leq \alpha_i \leq m \text{ for } i=0,1, \ldots ,n\}.$$ Note that $ \operatorname{Span}_k( \{ x_0^{-\alpha_0}x_1^{-\alpha_1} \cdots x_n^{-\alpha_n} | (\alpha_0, \cdots, \alpha_n) \in S \} ) \subsetneq H^1(X,\mathcal{O}_X)$. Let us compute the cardinality $|S|$ of $S$. We are looking for non-negative integer solutions of the problem: \\
 $$ \left\{ 
    \begin{array}{lr}
        \displaystyle\sum_{j=0}^{n} \alpha_j = (n-1)m-(n+1) \\
        \alpha_j \leq m-1 \text{ for } j=0,\ldots ,n
    \end{array}
\right\}  $$
 Let $N(0)$ be the number of all non-negative integer solutions of $ \displaystyle\sum_{j=0}^{n} \alpha_j = (n-1)m-(n+1) $ and $N(i)$ be the number of non-negative integer solutions of $ \displaystyle\sum_{j=0}^{n} \alpha_j = (n-1)m-(n+1) $ such that at least $i$ of $ \alpha_0, \ldots, \alpha_n$ is greater than or equal to $m$. Then by the principle of Inclusion-Exclusion we find $ |S|= \displaystyle \sum_{i=0}^{n} (-1)^iN(i)$ where 
 $$ N(i) = \binom{n+1}{i} \operatorname{Card} \left\{ 
    \begin{array}{lr}
        (\alpha_0,\ldots, \alpha_n):\displaystyle\sum_{j=0}^{n} \alpha_j = (n-1)m-(n+1)-im
    \end{array}
\right\} $$ for $i=0,\ldots,n$. Hence $N(i)= \binom{n+1}{i} \binom{(n-i-1)m-1}{n}$ for $i=0,\ldots,n$.
 \vskip 0.5cm
 
 We now assume that
 $  \text{char}(k) = 2 \text{ and } m\geq 3 \text{ is an odd integer} $ and we will compute the Frobenius map $F^*$ on the set $\{ x_0^{-\alpha_0}x_1^{-\alpha_1} \cdots x_n^{-\alpha_n} | (\alpha_0, \ldots, \alpha_n) \in S \}$ . For $(\alpha_0, \ldots, \alpha_n) \in S$,
 \begin{equation} \label{eq2}
\begin{split}
	F^*( x_0^{-\alpha_0}\cdots x_n^{-\alpha_n}) & =  (f_0\cdots f_{n-2}) x_0^{-2\alpha_0} \cdots x_n^{-2\alpha_n} \\
 & =  \sum_{\substack{\rho \in \operatorname{Sym}(\{0,\ldots,n \}) \\ \rho = (\rho_0\rho_1)(\rho_2\cdots \rho_n)  \\ \text{the sequence } \left\{ \rho_i \right\}^{n}_{i=2}\\ \text{ decreases at most twice}   } } h_{\rho}  x_{\rho_0}^{-2\alpha_{\rho_0}}x_{\rho_1}^{-2\alpha_{\rho_1}} x_{\rho_2}^{-2\alpha_{\rho_2}+m} \cdots x_{\rho_n}^{-2\alpha_{\rho_n}+m}
	\end{split}
	\end{equation}

in $H^1(X,\mathcal{O}_X)$ where $h_{\rho} = h_{\rho}(\lambda_0, \ldots , \lambda_{n-2}) \neq 0 $ as \[ h_{\rho}= \begin{cases} 
      \lambda_i & \text{ if } \rho_i = 0 \text{ for some }i\geq2 , \\
      1 & \text{ otherwise}.
   \end{cases}
\] Let us define sets $S_{\rho}= \{\rho_2, \cdots, \rho_n \}$ for each $h_{\rho}$ in the eqn. \ref{eq2}. Note that if $S_\rho = S_{\rho'} = S_{\rho''} $, then we have either $\rho=\rho'$ or $\rho=\rho''$ because the sequence $ \left\{ \rho_i \right\}^{n}_{i=2}$ decreases at most twice. If we sum up the coefficients of same (Laurent) monomial, we see that the sum $$\displaystyle\sum_{\rho}^{}h_{\rho}  x_{\rho_0}^{-2\alpha_{\rho_0}}x_{\rho_1}^{-2\alpha_{\rho_1}} x_{\rho_2}^{-2\alpha_{\rho_2}+m} \cdots x_{\rho_n}^{-2\alpha_{\rho_n}+m} $$ becomes $\sum_{}^{}a_Ix^I$ where \[ a_I= \begin{cases} 
      h_{\rho}+ h_{\rho'} & \text{ if } \rho_i = 0 =\rho'_j \text{ for some }i,j\geq2 \text{ with } i\neq j \text{ and } S_\rho = S_{\rho'}, \\ h_{\rho} & \text{ if } \rho_i = 0 \text{ for some }i\geq2 \text{ and }  S_\rho \neq S_{\rho'} \text{ for any } \rho' \neq \rho,
      \\  1 & \text{ otherwise}.
   \end{cases}
\]
 As $a_I \neq 0$, there is no cancellation in the sum \ref{eq2}. We observe that $F^*( x_0^{-\alpha_0}\cdots x_n^{-\alpha_n})= 0 $ if and only if at least one of the terms $-2\alpha_{\rho_i} +m \geq 0 $ for $\rho$ and $i=2,\ldots, n$ in each summation of the sum (\ref{eq2}), if and only if at least one of the terms $ \alpha_{\rho_i} \leq \frac{m-1}{2}$ ($m$ is odd) for $\rho$ and $i=2,\ldots, n$ in each summation of the sum (\ref{eq2}). This is the case when at least three of  $ \alpha_{i}$ are less than or equal to $\frac{m-1}{2}$. Let $T$ be the positive integer solutions of this problem, i.e., solutions of the following:
$$ \left\{ 
    \begin{array}{lr}
        \displaystyle\sum_{j=0}^{n} \alpha_j = (n-1)m\\
       1\leq \alpha_j \leq m \text{ for } j=0,\ldots ,n \\
        1\leq \alpha_{j_1},\ldots \alpha_{j_s} \leq (m-1)/2 \text{ for } s\geq 3
    \end{array}
\right\}  $$
Then $T = |S|-\binom{n+1}{n-1}T(n-1) + \binom{n+1}{n}T(n)- \binom{n+1}{n+1}T(n+1) $ where $T(i)$ is the number of non-negative integer solutions of $ \displaystyle\sum_{j=0}^{n} \alpha_j = (n-1)m-(n+1) $ such that at least $i$ of $ \alpha_0, \ldots, \alpha_n$ is greater than or equal to $(m-1)/2$ for $i\geq n-1$. Therefore 
$$ T(i) = \binom{n+1}{i} \operatorname{Card} \left\{ 
    \begin{array}{lr}
        (\alpha_0,\ldots, \alpha_n):\displaystyle\sum_{j=0}^{n} \alpha_j = (n-1)m-(n+1)-i(m-1)/2
    \end{array}
\right\}.$$ Hence $T(i)= \binom{n+1}{i} \binom{(n-1)m-i(m-1)/2-1}{n}$ for $i\geq n-1$. 
\vskip 0.5cm
As a result we have the following inequality:
$$a(X) = \operatorname{Rank}(\operatorname{Ker}(F^*)) \geq T.$$
 \end{example}
\section{Curves in Hirzebruch surfaces} 

In this section, we want to compute the action of Frobenius for curves on more general surfaces. Let $S$ be a smooth projective surface over an algebraically closed field of positive characteristic $p$ with invariants geometric genus $p_g=0$ and irregularity $q=0$. Let $X$ be a projective curve on $S$ with corresponding divisor $D$. We have the following short exact sequence which defines our curve. 
			$$0\longrightarrow \mathcal{O}_S(-D) \longrightarrow \mathcal{O}_S \longrightarrow \mathcal{O}_X \longrightarrow 0.$$ 
			By using the long exact sequence of cohomology obtained from the above short exact sequence, one sees that 
			$$H^1(X,\mathcal{O}_X) \cong H^2(S, \mathcal{O}_S(-D)) $$

		As an application, we illustrate how to compute the $p$-rank of integral curves on Hirzebruch surfaces
		$ \mathscr{H}_r$.
		\begin{example} \label{ex5}
			Let $k$ be an algebraically closed field of positive characteristic $p$. We will review the construction of the $r$-th Hirzebruch surface $\mathscr{H}_r$. Let us consider the fan (\cite{C}, Ex.3.1.16) 
   $$\Sigma_r = \{ \sigma_1 , \sigma_2   , \sigma_3, \sigma_4  , \rho_1 = \sigma_3 \cap \sigma_4 , \rho_2 = \sigma_1 \cap \sigma_4, \rho_3 = \sigma_1 \cap \sigma_2,  \rho_4 = \sigma_2 \cap \sigma_3, 0  \} $$ 
   in $\mathbb{R}^2$ where 
   $$ \sigma_1 = \operatorname{Cone}(e_1,e_2),\sigma_2 = \operatorname{Cone}(e_1, -e_2), \sigma_3= \operatorname{Cone}(-e_1+re_2,-e_2) , \sigma_4 = \operatorname{Cone}(-e_1+re_2,e_2). $$
   The corresponding toric variety $X_{\Sigma_r} $ is covered by open affine subsets,

    $$ \left\{ 
    \begin{array}{lr}
       U_{\sigma_1} = \operatorname{Spec}(k[x,y]) \cong k^2 \\ U_{\sigma_2} = \operatorname{Spec}(k[x,y^{-1}]) \cong k^2 \\ U_{\sigma_3} = \operatorname{Spec}(k[x^{-1},x^{-r}y^{-1}]) \cong k^2 \\ U_{\sigma_4} = \operatorname{Spec}(k[x^{-1},x^ry]) \cong k^2 
    \end{array} \right\}  $$ and glued according to (\cite{C} Prop.3.1.3). We call  $X_{\Sigma_r} $ the Hirzebruch surface $\mathscr{H}_r$. We set 
 $$  u_{\sigma_1} = -e_1 + re_2, \text{ } u_{\sigma_2} = e_2, \text{ }  u_{\sigma_3} = e_1,  \text{ }  u_{\sigma_4} = -e_2  $$ for the ray generators (\cite{C}, Lemma 1.2.15) of the one dimensional cones in $\Sigma_r$. Moreover the total coordinate ring of $\mathscr{H}_r$ is $$ R= k[x_{\sigma} \mid \sigma \in \Sigma(1)] $$
where $ \Sigma(1)$ is the set of all 1-dimensional cones in $ \Sigma_r$. Here we label $x_{\sigma_i}$ as $x_i$ and hence $ R= k[x_1, x_2,x_3, x_4 ]$. Now we will describe how to put grading on the ring $S$(\cite{C}, Section 5.2): \\
Let $M$ be the lattice of characters of the torus $(\mathbb{C}^*)^2$ of the surface $\mathscr{H}_r$. Note that $M$ is isomorphic to $(\mathbb{Z})^2$. For given $m\in M$, we define the principal divisor $\operatorname{div}(\chi^m)$ as 
$$ \operatorname{div}(\chi^m)= \displaystyle\sum_{\rho}^{} \langle m, u_{\rho} \rangle D_{\rho}$$ where $\langle, \rangle$ is the usual dot product in $\mathbb{Z}^2$. Now set $D_{\rho_i} = D_i$ and $u_{\rho_i}= u_i$. One may compute the divisor class group $\operatorname{CI}(\mathscr{H}_r)$(\cite{C} Ex.4.1.8 ) as follows:
  $$ \left\{ 
    \begin{array}{lr}
      0 \sim \operatorname{div}(\chi^{e_1}) = \displaystyle\sum_{i=1}^{4}\langle e_1,u_i\rangle D_i = -D_1 + D_3 \text{ which implies }  D_1 \sim D_3 \\  0 \sim \operatorname{div}(\chi^{e_2}) = \displaystyle\sum_{i=1}^{4}\langle e_2,u_i\rangle D_i = rD_1 + D_2 - D_4 \text{ which implies }  D_2 \sim -rD_3 + D_4
    \end{array} \right\}  $$
 Therefore we see that $\operatorname{CI}(\mathscr{H}_r)$ is free abelian group of rank two generated by classes $D_3$ and $D_4$ and hence $\operatorname{CI}(\mathscr{H}_r)$ is isomorphic to $\mathbb{Z}^2$. One identifies classes of $[D_3]$ and $[D_4]$ by $(1,0)$ and $(0,1)$, respectively. We have the short exact sequence 
 $$  0 \longrightarrow M \longrightarrow \mathbb{Z}^{\Sigma(1)}\longrightarrow \operatorname{CI}(\mathscr{H}_r) \longrightarrow 0 $$  where $m\in M$ is mapped to $\operatorname{div}(\chi^m)$ and $(a_{\rho})_{\rho \in \Sigma(1)}$ is mapped to $ [\displaystyle \sum_{\rho}^{} a_{\rho}D_{\rho}]$ (\cite{C} Thm.4.1.3). One defines the degree of a monomial $ x^a= \displaystyle \prod_{\rho}^{}x_{\rho}^{a_{\rho}}$ as to be $$ \operatorname{deg}(x^a) = \displaystyle [\displaystyle\sum_{\rho}^{} a_{\rho}D_{\rho}] \in \operatorname{CI}(\mathscr{H}_r).$$ 
 Therefore we obtain the followings:
 $$ \left\{ 
    \begin{array}{lr}
      \operatorname{deg}(x_1)= [D_1] = (1,0), \text{ }   \operatorname{deg}(x_2)= [D_2] = (-r,1)\\ \operatorname{deg}(x_3)= [D_3] = (1,0), \text{ }   \operatorname{deg}(x_4)= [D_4] = (0,1)
    \end{array} \right\}.  $$
    Our next object is to homogenize characters of $M$ described in (\cite{C}, Section 5.2) to determine cohomology groups on which the Frobenius map acts. In this way we will provide formula for the Frobenius map on the cohomology groups of curves in the Hirzebruch surface $ \mathscr{H}_r$. A Weil divisor $D= \displaystyle \sum_{i=1}^{4}a_iD_i$ on $\mathscr{H}_r $ yields the polyhedron 
    $$ P_D= \{ m \in M_{\mathbb{R}} = \mathbb{R}^2 \mid \langle m,u_i \rangle \geq -a_i, \text{ for } 1\leq i \leq 4\}.$$ 
   The $D$-$\textbf{homogenization}$ of $\chi^m$ is defined as to be the Laurent monomial $$x^{\langle m,D\rangle} = \displaystyle \prod_{i=1}^{4} x_i^{\langle m,D \rangle + a_i} . $$

     The cohomology group $H^0(\mathscr{H}_r, \mathcal{O}_{\mathscr{H}_r }(D)  )$ is spanned by the characters coming from lattice points of $P_D$, i.e.,
     \begin{equation} \label{eq1}
\begin{split}
H^0(\mathscr{H}_r, \mathcal{O}_{\mathscr{H}_r }(D)  ) & = \displaystyle\bigoplus_{m\in P_D \cap M}^{} k \cdot \chi^m \\
 & = \operatorname{Span}_k(\{ x_1^{\alpha_1}x_2^{\alpha_2}x_3^{\alpha_3}x_4^{\alpha_4}  \mid \displaystyle\sum_{i=1}^{4} \alpha_i\operatorname{deg}(x_i) = (a,b) \text{, } \alpha_i \geq 0 \})
\end{split}
\end{equation}
   which gives the degree $\operatorname{deg}(D)=(a,b)$ part of the ring $R$, and denoted by $R(a,b)$.
			Let $X$ be an integral curve corresponding to $D= a_1D_1 +a_2D_2 + a_3D_3 + a_4D_4  \sim aD_3+bD_4$ where $a,b>0$ and $\mathcal{O}_S(a,b)$ be the invertible sheaf given by $D$. More details can be found in (\cite{C}, Examples 6.1.16 and 6.3.23). \\
			Then  $$H^1(X,\mathcal{O}_X) \cong 
   H^2(\mathscr{H}_r, \mathcal{O}_{\mathscr{H}_r }(-a,-b) ) $$ as $p_g(\mathscr{H}_r) =0= q(\mathscr{H}_r)$. However, by using (\cite{C}, Theorem 9.2.7), we get 	$$ H^2(\mathscr{H}_r, \mathcal{O}_{\mathscr{H}_r }(-a,-b) ) = \displaystyle\bigoplus_{m\in \operatorname{Relint}(P_D)}^{} k\cdot \chi^{-m}$$ where $\operatorname{Relint}(P_D)$ is the interior of $P_D$ in $\mathbb{R}^2$. Note that $m\in \operatorname{Relint}(P_D)$ if and only if $ \langle m,u_i \rangle > -a_i$ if and only if $\langle m,u_i \rangle +a_i =b_i \geq 1 $ for $i=1,2,3,4$. Thus, $$H^1(X,\mathcal{O}_X)= \operatorname{Span}_k(
   \{  x_1^{-b_1}x_2^{-b_2}x_3^{-b_3}x_4^{-b_4}  \mid \displaystyle\sum_{i=1}^{4} b_i\operatorname{deg}(x_i) = (a,b)\text{ }, b_i \geq 1 \}).$$ Let $f \in R(a,b)$ be the polynomial defining the curve $X$ (\cite{C}, Proposition 5.2.4 and \cite{H}, Proposition 1.12A). We use the following commutative diagram to compute the action of Frobenius map $ F_X^*$  on the cohomology group $  H^1(X,\mathcal{O}_X)$.
   \begin{center}
				\begin{tikzcd}
				0 \arrow{r}&\mathcal{O}_{\mathcal{H}_r}(-a,-b ) \arrow[r,"\times f"]\arrow[d,"F_{\mathcal{H}_r}"] & \mathcal{O}_{\mathcal{H}_r} \arrow[r]\arrow[d,"F_{\mathcal{H}_r }"] & \mathcal{O}_{X}                  \arrow{r}\arrow[d,"F_{\mathcal{H}_r}"]       &  0 \\
				0 \arrow{r} & \mathcal{O}_{\mathcal{H}_r}(-pa,-pb) \arrow[r,"\times f^p"] \arrow[d,"\times f^{p-1}"]              &  \mathcal{O}_{{\mathcal{H}_r}}             \arrow{r}\arrow[d,"id"]        &  \mathcal{O}_{X^p}        \arrow{r}\arrow[d,"q"] & 0 \\
				0 \arrow{r}	&  \mathcal{O}_{ \mathcal{H}_r}(-a,-b ) \arrow[r,"\times f"]                            & \mathcal{O}_{\mathcal{H}_r}  \arrow{r} &\mathcal{O}_X \arrow{r}          &0  &
				\end{tikzcd} 
			\end{center}
			\vspace{0.1cm}
			$$\text{Diagram 5}$$
			Here, $X^p$ is the subscheme of $\mathcal{H}_r $ defined by $f^p=0$, and $F_X, \text{ }F_{ \mathcal{H}_r} $ are Frobenius morphisms of $ X \text{ and }  \mathcal{H}_r$, respectively. On the other hand, $X$ is a closed subscheme of $X^p$ and we have the quotient map $q:\mathcal{O}_{X^p} \longrightarrow \mathcal{O}_{X}$. By Diagram 5, we compute the Frobenius morphism $ F_X$ of $X$ as $$F_X=qF_{ \mathcal{H}_r}. $$  
   Let $F_X^* \text{ and } F_{ \mathcal{H}_r}^*$ be the following Frobenius maps 
   $$F_X^*: H^1(X,\mathcal{O}_X) \longrightarrow H^1(X,\mathcal{O}_X) $$ and $$ F_{ \mathcal{H}_r}^*:  H^2(\mathscr{H}_r, \mathcal{O}_{\mathscr{H}_r }(-a,-b) ) \longrightarrow  H^2(\mathscr{H}_r, \mathcal{O}_{\mathscr{H}_r }(-pa,-pb) )  $$ corresponding to morphisms  $F_X \text{ and } F_{ \mathcal{H}_r}$, respectively. By using the long exact sequences corresponding to the horizontal short exact sequences in Diagram 5, We compute the Frobenius map $F_X^*$ of $X$ as 
   $$ F_X^*=f^{p-1}F_{ \mathcal{H}_r}^* \text{ on } H^2( \mathcal{H}_r,\mathcal{O}_{\mathcal{H}_r }(-a,-b)) $$ under the identification $H^1(X,\mathcal{O}_X)=
   H^2(\mathscr{H}_r, \mathcal{O}_{\mathscr{H}_r }(-a,-b) ) $. After calculating explicit action of the Frobenius morphism, one can calculate the $p$-rank and the $a$-number of the curve $X$ by using Definition \ref{od}. 
   
   \vspace{0.1cm}
   
   To be more concrete, we consider a smooth curve $X$ over the algebraically closed field $k$ of characteristic $p=2$ which is the complete intersection in ${\mb P}^3_k$   given by the surfaces
			$$S_1:xw-yz=0$$
			$$S_2:y^3+z^3+w^3+\lambda x^3=0,\quad \text{where } \lambda \neq \lambda^2.$$ Note that this is an alternative approach to reproducing the results of Example \ref{ex4}. The surface $S_1$ is isomorphic to $\mathscr{H}_0 $ and the curve $X \subseteq S_1$ corresponds to the equation  $f=x_1^3(x_4^3 + \lambda x_2^3) + x_3^3(x_4^3 + x_2^3)=0$. As $D_1 \sim D_3$ and $D_2 \sim -rD_3 + D_4 =D_4 \text{ }(r=0)$,  we compute the degree of $f$ as 
   $$ \operatorname{deg}(f)= 3D_3 + 3D_4.$$ Therefore, we have that $$H^1(X,\mathcal{O}_X)\cong H^2(\mathscr{H}_0,\mathcal{O}_{\mathscr{H}_0}(-3,-3))$$ and 
			$$\beta = \{\rho_1 = \dfrac{1}{x_1x_3^2x_2x_4^2}, \text{ } \rho_2 = \dfrac{1}{x_1x_3^2x_2^2x_4},\text{ } \rho_3 = \dfrac{1}{x_1^2x_3x_2x_4^2},\text{ } \rho_4 = \dfrac{1}{x_1^2x_3x_2^2x_4}  \}$$ is a basis for
			$H^1(X,\mathcal{O}_X).$ The action of $F_X^*$ on $H^1(X,\mathcal{O}_X) $ is as follows:
   $$F_X^*(\rho_1)= (x_1^3(x_4^3 + \lambda x_2^3) + x_3^3(x_4^3 + x_2^3)) (\rho_1)^2 = \dfrac{ x_1^3(x_4^3 + \lambda x_2^3)}{x_1^2x_3^4x_2^2x_4^4 } + \dfrac{ x_3^3(x_4^3 +  x_2^3)}{x_1^2x_3^4x_2^2x_4^4 } = \dfrac{1}{x_1^2x_3x_2^2x_4} = \rho_4,$$
   $$F_X^*(\rho_2)= (x_1^3(x_4^3 + \lambda x_2^3) + x_3^3(x_4^3 + x_2^3)) (\rho_2)^2 = \dfrac{ x_1^3(x_4^3 + \lambda x_2^3)}{x_1^2x_3^4x_2^4x_4^2 } + \dfrac{ x_3^3(x_4^3 +  x_2^3)}{x_1^2x_3^4x_2^4x_4^2 } = \dfrac{1}{x_1^2x_3x_2x_4^2} = \rho_3,$$
    $$F_X^*(\rho_3)= (x_1^3(x_4^3 + \lambda x_2^3) + x_3^3(x_4^3 + x_2^3)) (\rho_3)^2 = \dfrac{ x_1^3(x_4^3 + \lambda x_2^3)}{x_1^4x_3^2x_2^2x_4^4 } + \dfrac{ x_3^3(x_4^3 +  x_2^3)}{x_1^4x_3^2x_2^2x_4^4} = \dfrac{1}{x_1x_3^2x_2^2x_4} = \rho_2,$$
     $$F_X^*(\rho_4)= (x_1^3(x_4^3 + \lambda x_2^3) + x_3^3(x_4^3 + x_2^3)) (\rho_4)^2 = \dfrac{ x_1^3(x_4^3 + \lambda x_2^3)}{x_1^4x_3^2x_2^4x_4^2 } + \dfrac{ x_3^3(x_4^3 +  x_2^3)}{x_1^4x_3^2x_2^4x_4^2} = \dfrac{\lambda}{x_1x_3^2x_2x_4^2} = \lambda\rho_1.$$ Thus, $F_X^*$ is a bijection on $H^1(X,\mathcal{O}_X)$ and we see that $X$ is an ordinary curve. As aresult, $a(X)=0 \text{ and } \sigma(X) =4$. 
		\end{example}

	\end{document}